\newtheorem{theorem}{Assertion}
\newtheorem{assumption}[theorem]{Assumption}
\theoremstyle{definition}
\newtheorem{remark}[theorem]{Remark}
\def\A{\mathbf{A}}
\def\B{\mathbf{B}}
\def\H{\mathbf{H}}
\def\J{\mathbf{J}}
\def\js{\mathbf{j}_s}
\def\hs{\mathbf{H}_s}
\def\n{\mathbf{n}}
\def\x{\mathbf{x}}
\def\v{\mathbf{v}}
\def\eps{\varepsilon}
\def\curl{\operatorname{curl}}
\def\Curl{\operatorname{Curl}}
\def\div{\operatorname{div}}
\def\Th{\mathcal{T}^h}
\def\N{\mathcal{N}}
\def\j{\mathbf{j}}
\def\f{\mathbf{f}}
\def\g{\mathbf{g}}
\def\RR{\mathbb{R}}
\begin{document}
\title{\fontsize{17pt}{17pt}\selectfont 
On the vector potential formulation with an energy-based\\ hysteresis model and its numerical solution}\author{\IEEEauthorblockN{Herbert Egger\IEEEauthorrefmark{1,2},
Felix Engertsberger\IEEEauthorrefmark{2}}
\vspace{5pt}
\IEEEauthorblockA{\IEEEauthorrefmark{1}
Johann Radon Institute for Computational and Applied Mathematics, 4040 Linz, Austria}
\IEEEauthorblockA{\IEEEauthorrefmark{2}
Institute of Numerical Mathematics, Johannes Kepler University, 4040 Linz, Austria}
}

\IEEEtitleabstractindextext{%
\begin{abstract}
The accurate modelling and simulation of electric devices involving ferromagnetic materials requires the appropriate consideration of magnetic hysteresis. We discuss the systematic incorporation of the energy-based vector hysteresis model of Henrotte et al. into vector potential formulations for the governing magnetic field equations. The field model describing a single step in a load cycle is phrased as a convex minimization problem which allows us to establish existence and uniqueness of solutions and to obtain accurate approximations by finite element discretization. Consistency of the model with the governing field equations is deduced from the first order optimality conditions. In addition, two globally convergent iterative methods are presented for the solution of the underlying minimization problems. The efficiency of the approach is illustrated by numerical tests for a typical benchmark problem.
\end{abstract}

\begin{IEEEkeywords}
Ferromagnetic materials, 
vector potential formulation,
finite element analysis,
minimization algorithms.
\end{IEEEkeywords}}

\maketitle
\thispagestyle{empty}
\pagestyle{empty}

\section{Introduction}
\IEEEPARstart{M}{agnetic} hysteresis 
is one of the key properties of ferromagnetic materials and responsible for a major part of the core losses in electric motors and power transformers. It further explains the appearance of permanent magnets. The systematic incorporation of appropriate models for hysteresis into numerical solvers for magnetic field equations therefore is an important step for the simulation, optimization, and design of such devices~\cite{Meunier2008,Salon2012}.
%&
While the physical origin of magnetic hysteresis is based on microscopic phenomena, macroscopic models like the Preisach or Jiles-Atherton model have to be used to efficiently describe the effects on the relevant scales~\cite{Jiles1986,Mayergoyz1991}.  
The use of these models in finite element simulation has been investigated intensively in the literature; see e.g.~\cite{Benabou2003,Gyselinck2004}. 
After appropriate calibration, both are capable of reproducing scalar hysteresis curves obtained in typical measurements; their extension to the vectorial case is however not trivial. Moreover, both models lack a solid thermodynamical justification.
To overcome this deficiency, Berqvist~\cite{Bergqvist1997} proposed an alternative model for magnetic hysteresis based on local energy balances and studied its numerical realization as a vector play operator. These investigations were further refined by Henrotte et al.~\cite{Henrotte2006}. 
A fully implicit time-incremental version of the \emph{energy-based magnetic hysteresis model} was presented by Francois-Lavet et al. in \cite{Lavet2013} and further analysed in \cite{Prigozhin2016,Kaltenbacher2022}.  
The energy-based model has many advantageous features: It is vectorial by construction and provides a local energy-balance and a clear definition of hysteresis losses~\cite{Bergqvist1997,Henrotte2006}. It can further be extended easily to increase accuracy and to include rate-dependent and multiphysics effects \cite{Lavet2013,Prigozhin2016}. Moreover, it can be incorporated quite naturally into magnetic field solvers based on the magnetic scalar potential~\cite{Jacques2015,Domenig2024}. 
In this paper, we study the systematic implementation of the energy-based hysteresis model in the form presented in \cite{Lavet2013} into finite element methods based on the magnetic vector potential. 
%

%\textbf{Model problem.}
\subsection{Model problem}
We consider a single load step in a magneto-quasistatic field computation described by the following set of equations: 
\begin{alignat}{3}
\curl \H &= \mathbf{j_s}, \qquad & 
\div \B &= 0 \quad &&\text{in } \Omega, \label{eq:1}\\
\B = \mu_0 \H &+ \J, \qquad &  
\B \cdot \n &= 0 \quad &&\text{on } \partial\Omega. \label{eq:2}
\end{alignat}
Here $\H$ denotes the magnetic field intensity, $\B$ the magnetic induction, $\js$ the driving currents, $\mu_0$ the permeability of vacuum, and $\J$ the magnetic polarization density, which encodes the internal state of the system. 
Following \cite{Lavet2013}, this auxiliary variable is determined implicitly by solving
\begin{align} \label{eq:3}
    \min\nolimits_{\J}  \left\{ U(\J) - \langle \H ,\J \rangle + \chi |\J - \J_{p}|_\eps \right\}.
\end{align}
The internal energy density $U(\J)$ and pinning strength $\chi \ge 0$ characterize the specific material, while $\J_p$ represent the polarization from the previous load step. 
We use $|\x|_\eps = \sqrt{|\x|^2+\eps}$ to denote a regularized version of the Euclidean norm. 
\begin{remark}
For $\eps>0$, the regularized norm $|\x|_\eps$ is infinitely differentiable, which allows us to circumvent some mathematical technicalities and facilitate implementation of the model later on. 
Since $|\x|_0=|\x|$, we recover for $\eps=0$ the simplest version of the model in \cite{Lavet2013}. For general $\eps \ge 0$, one can show that $|\x| \le |\x|_\eps \le |\x| + \sqrt{\eps}$ which allows to fully control the error introduced by regularization. 
The accuracy of the model can be increased by splitting $\J = \sum_k \J^k$ into partial polarizations $\J^k$ which are updated individually by using similar minimization problems for every index $k$ separately; see~\cite{Lavet2013} and Section~\ref{sec:num}.
\end{remark}
Together with the first equation in \eqref{eq:2}, the model \eqref{eq:3} is sufficient to generate hysteresis curves with the typical characteristics~\cite{Lavet2013,Prigozhin2016}. 
The incorporation into the magnetic field problem \eqref{eq:1}--\eqref{eq:2} however leads to a fully coupled nonlinear system of differential and algebraic equations, in which the differential equations and the constitutive relation have to be treated simultaneously. 
The solution of this system is challenging, both, from a theoretical and a numerical point of view. 

% In addition, all variables in the model can be chosen space dependent.  . the pinning strength $\chi$ may be chosen matrix valued and s. Various natural generalizations of these conditions are possible, e.g. the incorporation of spatial dependence of $A_s$, $J_s$, and $\chi$, different forms of the energy densities $U(\J)$, or consideration of multiple pinning forces; see \cite{Lavet2013,Egger2024quasi} and Section~\ref{sec:num}. 
% %
% Once the solution $(\B,\H,\J)$ of the above system has been computed, one can update $\J \to \J_p$ and proceed with the  next time step. 

%\textbf{Related work.}
\subsection{Related work}
As noted in \cite{Jacques2015,egger2024inverse}, the model \eqref{eq:3} implicitly defines a relation 
$\B = \mathcal{B}(\H;\J_p)$, called \emph{forward hysteresis operator} in the following, which describes the constitutive behaviour of the material for a given state of the internal polarization $\J_p$. 
To satisfy \eqref{eq:1}, one may then decompose $\H = \H_s - \nabla \psi$ into a known source field $\H_s$ with $\js = \curl \H_s$ and the gradient of a scalar potential $\psi$. The magnetic Gau\ss-law leads to 
\begin{align} \label{eq:4}
\div (\mathcal{B}(\H_s - \nabla \psi;\J_p)) &= 0.
\end{align}
Numerical solutions of \eqref{eq:1}--\eqref{eq:3} based on this approach were successfully demonstrated in \cite{Domenig2024,Jacques2016,Egger2024quasi}. 
Note that the mapping $\H \mapsto \mathcal{B}(\H;\J_p)$ for $\eps=0$ in \eqref{eq:3} is not differentiable in a classical sense, which complicates the analysis and numerical solution of the problem. A rigorous justification of some of the methods above was therefore only given recently in \cite{egger2025semi}. 

As shown in this reference, the mapping $\H \mapsto \mathcal{B}(\H;\J_p)$ is invertible, which allows to define the corresponding \emph{inverse hysteresis operator} $\B \mapsto \mathcal{H}(\B;\J_p))$. A compact representation of the function $\mathcal{H}(\B;\J_p))$ has been given in \cite{egger2024inverse} and its properties have been fully analysed in \cite{egger2025semi}. 
The authors of~\cite{Jacques2015} numerically inverted the material relation $\B=\mathcal{B}(\H;\J_p))$ and then considered the solution of the nonlinear system
\begin{align} \label{eq:7}
\curl (\mathcal{H}(\curl \A;\J_p)) &= \js 
\end{align}
where $\A$ is the vector potential representing $\B=\curl \A$. 
The evaluation of $\mathcal{H}(\B;\J_p)$ in this approach is rather cumbersome, as it involves the numerical minimization problem \eqref{eq:3} as well as the inversion of \eqref{eq:4}; see \cite{Jacques2018} for an extensive discussion.
The non-smoothness of the mapping $\mathcal{H}(\B;\J_p)$ causes further difficulties in the analysis and realization. 
An efficient implementation and a rigorous justification can be found in \cite{egger2025semi}.

Another quite different approach for the incorporation of the magnetic hysteresis model \eqref{eq:3} into a vector potential formulation of \eqref{eq:1}--\eqref{eq:2} was presented in the original work \cite{Lavet2013}.  
Expressing $\H = \frac{1}{\mu_0} (\B - \J)$, substituting $\B = \curl \A$, and inserting in Ampere's law gives rise to the iterative scheme
\begin{align}
&\curl (\nu_0 \curl \A^{n+1}) = \j_s + \curl(\nu_0 \J^n) \label{eq:5}\\
&\J^{n+1} = \arg\min\nolimits_\J \, \{ U(\J) - \langle \H^{n+1},\J\rangle + \chi |\J - \J_p|_\eps\} \label{eq:6}
\end{align}
with $\nu_0=1/\mu_0$ and $\H^{n+1}=\nu_0 (\curl \A^{n+1} - \J^n)$ used for abbreviation. 
It is not difficult to see that any limit of this iteration yields a solution of the original problem~\eqref{eq:1}--\eqref{eq:3}. The convergence of the scheme, however, seems unclear in general. 
As a side result, we will present a slight modification of this method which can be shown to converge globally.

\subsection{Main contributions and outline.}
In this paper, we present a novel systematic approach to the numerical solution of \eqref{eq:1}--\eqref{eq:3} by means of a vector potential formulation. 
For ease of notation, all results are presented for problems in three space dimensions and the magnetic hysteresis model \eqref{eq:3} with a single internal variable. The modifications required for the two-dimensional case and multiple pinning forces will be outlined further below.
In Section~\ref{sec:variational}, we derive a variational characterization of solutions to \eqref{eq:1}--\eqref{eq:3} in terms of the magnetic vector potential $\A$ and magnetic polarization $\J$, and briefly comment on its well-posedness. 
In Section~\ref{sec:solution} we discuss the systematic discretization of the problem by finite elements and present two globally convergent iterative solvers. 
Numerical tests are presented in Section~\ref{sec:num} for illustration of the performance of the proposed algorithms and a comparison with alternative approaches. 
The presentation closes with a short summary of our results and topics for future research.

\section{Variational characterization of solutions}
\label{sec:variational}

We start with laying the theoretical foundations for our further investigations. The key step is to identity the solutions of \eqref{eq:1}--\eqref{eq:3} as those of an equivalent variational problem.

\subsection{Notation and main assumptions}
Let $\Omega \subset \RR^3$ be a bounded domain. By $L^2(\Omega)$, $H^1_0(\Omega)$, and $H_0(\curl;\Omega)$ we denote the usual spaces of square integrable functions and those with well-defined gradient or curl and vanishing traces at the boundary~\cite{Monk2003}.
We may decompose
\begin{align} \label{eq:split}
H_0(\curl) = \nabla H_0^1(\Omega) \oplus V
\end{align}
into a direct sum of closed subspaces, such that $\|\curl \cdot\|_{L^2(\Omega)}$ defines a norm on $V$ under some topological restrictions on the domain~\cite{Monk2003}. We further abbreviate $Q=L^2(\Omega)^3$ in the sequel.
For our analysis, we make use of the following assumptions, which allow to us to state and prove rigorous theoretical results. 

\begin{assumption} \label{ass:1}
$\Omega \subset \RR^3$ is a bounded Lipschitz domain and topologically trivial. 
The space $V$ in \eqref{eq:split} is chosen such that the splitting is direct and stable. 
The energy density is defined as
$U(\J)=-\frac{2 A_s J_s}{\pi} \log(\cos(\frac{\pi}{2}\frac{\J}{J_s}))$ with $A_s$, $J_s>0$, and we set $\chi \ge 0$. Finally, 
$\js =\curl \hs$ for some $\hs \in H(\curl;\Omega)$.
\end{assumption}
Various generalizations of these conditions are possible, e.g. the incorporation of spatial dependence of $A_s$, $J_s$, and $\chi$, different forms of the energy densities $U(\J)$, or the consideration of multiple pinning forces; see \cite{Lavet2013,Egger2024quasi} and Section~\ref{sec:num}. 

\subsection{A variational problem}

We will show below that the solution of \eqref{eq:1}--\eqref{eq:3} can be constructed from the minimizer of the variational problem
\begin{align} \label{eq:min}
\min_{\A \in V} \min_{\J \in Q} \int_\Omega \frac{\nu_0}{2} |\curl \A &- \J|^2 - \langle \H_s, \curl \A\rangle \\
&+ U(\J) + \chi |\J-\J_p|_\eps \, dx. \notag
\end{align}
Note that the function spaces $V \subset H_0(\curl;\Omega)$, defined above, already incorporates the relevant gauging and boundary conditions for the vector potential $\A$. 
As a first step in our analysis, let us clarify the well-posedness of this problem.
\begin{theorem}
Let Assumption~\ref{ass:1} hold. Then for any $\eps \ge 0$ and any $\J_p \in Q$, problem \eqref{eq:min} admits a unique minimizer.
\end{theorem}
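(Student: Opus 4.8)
The plan is to solve \eqref{eq:min} by the direct method of the calculus of variations, exploiting the convexity of the objective. Write $F(\A,\J)$ for the integral functional in \eqref{eq:min}, regarded as an extended-real-valued functional on $V \times Q$. As a first observation, $F$ is convex: the coupling term $\frac{\nu_0}{2}\|\curl\A - \J\|_{L^2}^2$ is a convex quadratic form in $(\A,\J)$, the term $-\langle\H_s,\curl\A\rangle$ is linear, the regularized norm $|\x|_\eps = \sqrt{|\x|^2+\eps}$ is convex (being the Euclidean norm of $(\x,\sqrt\eps)$), and $U(\J)=u(|\J|)$ with $u(s)=-\frac{2A_s J_s}{\pi}\log\cos(\frac{\pi s}{2 J_s})$ is convex, since $u$ is increasing and convex on $[0,J_s)$; moreover $U \ge U(0)=0$.

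A key structural feature I would use is that $U$ acts as a \emph{barrier}: it is finite only for $|\J|<J_s$ and blows up as $|\J|\to J_s^-$. Hence $\int_\Omega U(\J)\,dx<\infty$ forces $|\J|<J_s$ almost everywhere, so that automatically $\|\J\|_Q \le J_s|\Omega|^{1/2}$. This built-in $L^2$-bound on $\J$ is what I would combine with the quadratic term to obtain coercivity: since $U \ge 0$ and $\chi|\cdot|_\eps \ge 0$, any bound $F(\A,\J)\le C$ yields $\frac{\nu_0}{2}\|\curl\A-\J\|^2 \le C + \|\H_s\|\,\|\curl\A\|$, and because $\|\curl\A\|\le\|\curl\A-\J\|+\|\J\|$ with $\|\J\|$ already bounded, a standard quadratic estimate bounds $\|\curl\A\|$. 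By Assumption~\ref{ass:1}, $\|\curl\cdot\|$ is a norm on $V$ equivalent to the graph norm, so $\A$ is bounded in $V$. Finiteness of the infimum follows by testing with $\A=0$, $\J=0$.

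For existence, I would take a minimizing sequence $(\A_n,\J_n)$, which by the above is bounded in the reflexive Hilbert space $V\times Q$, and extract a weakly convergent subsequence $\A_n\rightharpoonup\A^*$ in $V$, $\J_n\rightharpoonup\J^*$ in $Q$, so that $\curl\A_n\rightharpoonup\curl\A^*$ in $L^2$. Each term of $F$ is weakly lower semicontinuous: the quadratic and the $|\cdot|_\eps$ contributions because they are convex and strongly continuous, the linear term because it is weakly continuous, and $\int_\Omega U(\J)\,dx$ because $U$ is a proper, convex, lower semicontinuous integrand bounded below, so the associated integral functional is sequentially weakly lower semicontinuous on $Q$. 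Passing to the $\liminf$ gives $F(\A^*,\J^*)\le\inf F$, so $(\A^*,\J^*)$ is a minimizer; in particular $\int_\Omega U(\J^*)\,dx<\infty$, whence $\J^*$ lies in the effective domain $\{|\J|<J_s\}$.

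Uniqueness I would deduce from strict convexity. Since $u$ is strictly convex and increasing, $U(\J)=u(|\J|)$ is strictly convex, so $\int_\Omega U\,dx$ is strictly convex on $Q$; given two minimizers, convexity of $F$ forces their polarizations to coincide, $\J_1=\J_2=:\J^*$. With $\J^*$ fixed, the remaining functional $\frac{\nu_0}{2}\|\curl\A-\J^*\|^2-\langle\H_s,\curl\A\rangle$ is strictly convex in $\curl\A$, and since $\curl$ is injective on $V$ this gives $\A_1=\A_2$. The step I expect to require the most care is the weak lower semicontinuity of the barrier functional $\int_\Omega U(\J)\,dx$ together with the argument that the weak limit $\J^*$ remains in the effective domain; this hinges on treating $U$ as an extended-real-valued normal convex integrand rather than a smooth function. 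The nonsmooth case $\eps=0$ causes no additional difficulty, as only convexity and lower semicontinuity, not differentiability, enter the argument.
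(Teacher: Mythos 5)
Your proposal is correct and takes essentially the same route as the paper: the paper's proof merely asserts that the integrand is strongly convex, coercive, and continuous on its domain and then appeals to standard convex analysis \cite{Ekeland1999}, and your argument is precisely those standard arguments written out in full (barrier-induced coercivity, weak lower semicontinuity of the convex integral functionals, strict convexity plus injectivity of $\curl$ on $V$ for uniqueness). No gaps.
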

\begin{proof}
Using $\B = \curl \A$, the integrand in \eqref{eq:min} amounts to
$$
\frac{\nu_0}{2} |\B - \J|^2 - \langle \H_s, \B \rangle + U(\J) + \chi |\J-\J_p|_\eps.
$$
This function is strongly convex, coercive, and continuous on its domain. 
Existence of a unique solution then follows from standard arguments of convex analysis~\cite{Ekeland1999}. 
\end{proof}

\subsection{First order optimality conditions}
Since the functional to be minimized in \eqref{eq:min} is convex, its minimizers are equivalently characterized by the first order optimality conditions. For parameter $\eps>0$, they read
\begin{alignat*}{2}
\int_\Omega \nu_0 (\curl \A - \J) \cdot \curl \A' - \H_s \cdot \curl \A' &= 0, \\ 
\int_\Omega (\nu_0(\J - \curl \A) + \nabla U(\J) + \chi \frac{\J-\J_p}{|\J-\J_p|_\eps}) \cdot \J' &= 0, 
\end{alignat*}
which hold for all test functions $\A' \in V$ and $\J' \in Q$.
The strong form of these identities amounts to the equations
\begin{alignat}{2} 
&\curl (\nu_0 (\curl \A - \J)) = \j_s \quad &&\text{in } \Omega,\label{eq:opt1}\\
&\nabla U(\J) -\H  +  \chi \frac{\J-\J_p}{|\J-\J_p|_\eps} = 0\quad && \text{in } \Omega, \label{eq:opt2}
%&0 \in  \nabla U(\J) -\H  +  \chi \partial |\J-\J_p|_\eps \quad && \text{in } \Omega. \label{eq:opt2}
\end{alignat}
where $\curl \H_s=\j_s$ and $\H=\nu_0(\B-\J)$ with $\B=\curl \A$ was used to rewrite the two equations in a convenient form. 
These observations immediately lead to the following result.
\begin{theorem} \label{thm:4}
Let Assumption~\ref{ass:1} hold and $\eps >0$. Then the minimizer of problem \eqref{eq:min} is characterized by \eqref{eq:opt1}--\eqref{eq:opt2} together with the corresponding boundary and gauging conditions contained in the definition of the space $V$.
\end{theorem}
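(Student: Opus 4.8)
The plan is to exploit convexity. For $\eps>0$ the integrand in \eqref{eq:min} is convex in $(\A,\J)$ and, away from the boundary of the effective domain of $U$, infinitely differentiable: the quadratic term $\frac{\nu_0}{2}|\curl\A-\J|^2$ and the linear term $-\langle\hs,\curl\A\rangle$ are smooth, and by the Remark the regularized pinning term $\chi|\J-\J_p|_\eps$ is $C^\infty$ with gradient $\chi\frac{\J-\J_p}{|\J-\J_p|_\eps}$ of modulus at most $\chi$. For a convex functional a point is a global minimizer if and only if it is stationary, so I would first record that the minimizer is characterized by the vanishing of the Gâteaux derivative in every admissible direction $(\A',\J')\in V\times Q$. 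Computing these directional derivatives reproduces exactly the two weak identities stated immediately before the theorem; conversely, any solution of those identities is, by convexity, the global minimizer, which is moreover unique by the preceding Assertion.

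It then remains to pass from the weak identities to the strong form \eqref{eq:opt1}--\eqref{eq:opt2}. For the second identity the test space is all of $Q=L^2(\Omega)^3$, so the fundamental lemma of the calculus of variations forces the bracketed integrand to vanish pointwise almost everywhere; substituting $\H=\nu_0(\B-\J)$ with $\B=\curl\A$ gives precisely \eqref{eq:opt2}. For the first identity I would use the definition of the distributional curl: testing $\nu_0(\curl\A-\J)-\hs$ against all $\A'\in V$ (equivalently, since gradients lie in the kernel of $\curl$, against all of $H_0(\curl;\Omega)$) and integrating by parts recovers $\curl(\nu_0(\curl\A-\J))=\js$ in $\Omega$, using $\curl\hs=\js$. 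No separate natural boundary condition arises because the test fields have vanishing tangential trace; the essential boundary and gauging conditions are exactly those encoded in the definition of $V$ through the splitting \eqref{eq:split}, as asserted.

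The step I expect to be delicate is the barrier term $\int_\Omega U(\J)\,dx$, whose integrand $U(\J)=-\frac{2A_sJ_s}{\pi}\log\cos(\frac{\pi|\J|}{2J_s})$ has effective domain $\{|\J|<J_s\}$ and gradient $\nabla U$ that diverges as $|\J|\uparrow J_s$; in particular finiteness of $\int U(\J)\,dx$ does not by itself guarantee $\nabla U(\J)\in L^2$, so naive differentiation under the integral sign is not justified. To sidestep this I would argue through convex subdifferentials: the optimality condition reads $0\in\partial F$, and since the remaining terms are Fréchet differentiable with $L^2$ gradients, the sum rule produces an element $\eta\in\partial\big(\int_\Omega U\big)(\J)$ with $\eta=-\nu_0(\J-\B)-\chi\frac{\J-\J_p}{|\J-\J_p|_\eps}\in L^2$. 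By the theory of convex integral functionals this is equivalent to $\eta(x)\in\partial U(\J(x))$ almost everywhere; since $U$ is differentiable with $\partial U=\{\nabla U\}$ on the interior of its domain, an $L^2$ selection forces $|\J|<J_s$ a.e. and $\eta=\nabla U(\J)$. This simultaneously shows that the barrier keeps the minimizer in the interior, that $\nabla U(\J)\in L^2$ a posteriori, and that the pointwise relation \eqref{eq:opt2} indeed holds, which is the main point where care is needed.
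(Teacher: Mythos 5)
Your proposal is correct, and its skeleton is the same as the paper's: convexity makes the first-order conditions necessary and sufficient, the weak identities are the vanishing directional derivatives, and the strong forms \eqref{eq:opt1}--\eqref{eq:opt2} follow from the fundamental lemma (for the $\J$-equation) and from the definition of the distributional curl after observing that testing over $V$ is equivalent to testing over all of $H_0(\curl;\Omega)$, since gradients from the splitting \eqref{eq:split} lie in the kernel of the curl (that step is right, and worth making explicit as you did). The difference is one of rigor rather than route: the paper gives no proof at all for Assertion~\ref{thm:4} --- it states the weak identities, rewrites them in strong form, and declares that these observations ``immediately lead to'' the result. What you add, and what the paper silently assumes, is the treatment of the barrier term $\int_\Omega U(\J)\,dx$. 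You correctly note that $U$ has effective domain $\{|\J|<J_s\}$ with gradient blowing up at the boundary, so the integral functional is not Fr\'echet differentiable on all of $Q$ and one cannot simply differentiate under the integral sign. Your fix --- isolate, via the Moreau--Rockafellar sum rule, an $L^2$ element $\eta\in\partial\bigl(\int_\Omega U\bigr)(\J)$, then invoke Rockafellar's characterization of subdifferentials of convex integral functionals to obtain $\eta(x)\in\partial U(\J(x))$ a.e., whence $|\J|<J_s$ a.e.\ and $\eta=\nabla U(\J)\in L^2$ --- is exactly the right tool, and it buys two facts the paper never establishes: that the minimizer stays strictly inside the saturation ball, and that $\nabla U(\J)$ is square-integrable so that \eqref{eq:opt2} is a genuine pointwise identity between $L^2$ functions. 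In short, your argument proves what the paper asserts, at a level of rigor the paper does not attempt.
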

For completeness of the presentation, let us briefly comment on the extension of the result to the limiting case $\eps=0$.
\begin{remark}
For $\eps=0$, the function $|\J-\J_p|_\eps = |\J - \J_p|$ is not differentiable at $\J=\J_p$. In this case, equation \eqref{eq:opt2} has to be replaced by the inclusion
\begin{alignat}{2} \label{eq:opt2b}
&\H - \nabla U(\J) \in  \chi \partial |\J-\J_p|_\eps.
\end{alignat}
Here $\partial f(x)$ denotes the sub-differential of the convex function $f$ at $x$, i.e., the set of all vectors (or corresponding linear functions) describing the supporting hyperplanes to the graph of $f$ at $x$; see \cite{Rockafellar1970} for details.
For $f(\J)=|\J-\J_p|$, we have 
\begin{align*}
\partial |\J-\J_p| 
=\begin{cases}
\{\frac{\J-\J_p}{|\J-\J_p|}\}, & \J \ne \J_p, \\ 
\{\x : |\x| \le 1\}, & \J = \J_p.
\end{cases}
\end{align*}
In the smooth case $\eps>0$, we have $\partial |\J - \J_p|_\eps=\{\frac{\J-\J_p}{|\J-\J_p|_\eps}\}$ for any $\J$. 
Then \eqref{eq:opt2b} reduces to \eqref{eq:opt2}. 
Assertion~\ref{thm:4} thus remains valid for all $\eps \ge 0$, 
if we replace \eqref{eq:opt2} by \eqref{eq:opt2b}. 
This allows to treat all cases in a unified manner.
\end{remark}

\subsection{Equivalence with the model problem}
With these preliminary considerations, we can now characterize solutions of problem~\eqref{eq:1}--\eqref{eq:3} via the minimizers of \eqref{eq:min}. 

\begin{theorem}
Let Assumption~\ref{ass:1} hold and $(\A,\J)$ denote the unique minimizer of problem \eqref{eq:min} with $\eps \ge 0$. Then $\J$, together with $\B=\curl \A$ and $\H=\nu_0(\B - \J)$, satisfies \eqref{eq:1}--\eqref{eq:3}. 
\end{theorem}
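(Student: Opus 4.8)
The plan is to obtain every equation in \eqref{eq:1}--\eqref{eq:3} directly from the first order optimality conditions already recorded in Assertion~\ref{thm:4}. Let $(\A,\J)$ be the unique minimizer of \eqref{eq:min}, and put $\B=\curl\A$ and $\H=\nu_0(\B-\J)$ as in the statement. By Assertion~\ref{thm:4}, together with the preceding remark in the degenerate case $\eps=0$, this pair satisfies \eqref{eq:opt1} and \eqref{eq:opt2} (the latter read as the inclusion \eqref{eq:opt2b} when $\eps=0$). The whole argument then consists in translating these two conditions back into the field form \eqref{eq:1}--\eqref{eq:3}.

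First I would dispose of the field equations, which are essentially definitional. Since $\H=\nu_0(\curl\A-\J)$ by construction, condition \eqref{eq:opt1} is literally $\curl\H=\js$, while $\div\B=\div\curl\A=0$ holds identically. The constitutive law of \eqref{eq:2} is obtained by multiplying $\H=\nu_0(\B-\J)$ with $\mu_0=1/\nu_0$ and rearranging to $\B=\mu_0\H+\J$. For the boundary condition $\B\cdot\n=0$ I would invoke $\A\in V\subset H_0(\curl;\Omega)$: the tangential trace $\A\times\n$ then vanishes on $\partial\Omega$, and the standard surface identity $\curl\A\cdot\n=\div_{\partial\Omega}(\A\times\n)$ (meaningful since $\B=\curl\A\in H(\div;\Omega)$ because $\div\B=0$) immediately yields $\B\cdot\n=0$.

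The remaining and, to my mind, decisive point is the constitutive minimization \eqref{eq:3}. Here I would first localize: as the test functions $\J'$ in the second optimality condition range over all of $Q=L^2(\Omega)^3$, condition \eqref{eq:opt2} (resp.\ \eqref{eq:opt2b}) holds pointwise for almost every $x\in\Omega$. The key observation is that this pointwise relation is exactly the first order optimality condition of the convex functional $g(\J)=U(\J)-\langle\H,\J\rangle+\chi|\J-\J_p|_\eps$ occurring in \eqref{eq:3}, evaluated at the field $\H$ just defined. I would stress that the integrand of \eqref{eq:min} contains the \emph{quadratic} term $\frac{\nu_0}{2}|\B-\J|^2$, whereas \eqref{eq:3} contains the \emph{linear} term $-\langle\H,\J\rangle$; the two nonetheless have the same derivative with respect to $\J$ at the solution, namely $-\nu_0(\B-\J)=-\H$, and this is precisely where the identification $\H=\nu_0(\B-\J)$ enters. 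Since $U$ and $|\cdot|_\eps$ are convex for every $\eps\ge0$, so is $g$, and for a convex function the vanishing of the (sub)gradient is sufficient, not merely necessary, for a global minimum. Hence $\J(x)$ minimizes $g$ at almost every point, which is exactly \eqref{eq:3}.

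The hard part is thus not any calculation but the conceptual step in the previous paragraph: recognizing that \eqref{eq:min} and \eqref{eq:3} are assembled from genuinely different functionals of $\J$---quadratic against linear---that happen to share the same stationarity condition once $\H$ is set equal to $\nu_0(\B-\J)$, whereupon convexity upgrades stationarity to global optimality. The case $\eps=0$ deserves a little extra attention, since \eqref{eq:opt2} must be interpreted as the subdifferential inclusion \eqref{eq:opt2b}; but the explicit description of $\partial|\cdot|$ recalled in the remark shows that \eqref{eq:opt2b} is again the optimality condition for the nonsmooth convex $g$, so the same argument carries over without change.
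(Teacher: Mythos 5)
Your proof is correct and follows essentially the same route as the paper's: it reads off the field equations \eqref{eq:1}--\eqref{eq:2} from the optimality condition \eqref{eq:opt1} together with the ansatz $\B=\curl\A$ and the boundary/gauging conditions built into $V$, and then identifies \eqref{eq:opt2} (resp.\ \eqref{eq:opt2b} for $\eps=0$) as the (sub)gradient optimality condition of the convex problem \eqref{eq:3}, which by convexity characterizes its global minimizer. You merely make explicit two points the paper leaves implicit, namely the surface-divergence argument for $\B\cdot\n=0$ and the observation that the quadratic term in \eqref{eq:min} and the linear term in \eqref{eq:3} share the same $\J$-derivative once $\H=\nu_0(\B-\J)$.
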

\begin{proof}
Substituting $\H=\nu_0(\B - \J) = \nu_0 (\curl \A - \J)$ in \eqref{eq:opt1} yields the first equation in \eqref{eq:1}. The ansatz $\B=\curl \A$ and the boundary conditions of $\A$ in the definition of the set $V$ further yield $\div \B=0$ and $\B \cdot \n=0$, which already shows validity of \eqref{eq:1}--\eqref{eq:2}. 
Relation \eqref{eq:opt2} respectively \eqref{eq:opt2b} finally characterize the minimizers of the convex minimization problem \eqref{eq:3}. 
\end{proof}

\section{Numerical solution}
\label{sec:solution}

The results of the previous section show that solutions of our model problem \eqref{eq:1}--\eqref{eq:3} can be characterized via the minimizers of the variational problem~\eqref{eq:min}.  
We now show that this problem is also very well-suited for a systematic numerical treatment. 
We first briefly discuss the spatial discretization by finite elements and then present two iterative solution strategies.

\subsection{Finite element discretization}
Let $\Th = \{T\}$ denote a tetrahedral mesh of the domain $\Omega$. 
We write $P_k(\Th)=\{v : v|_T \in P_k(T) \ \forall T \in \Th\}$ for the spaces of piecewise polynomials of order $k$. We write 
$\tilde V_h = \{\v_h \in H_0(\curl;\Omega) : \v_h |_T \in \N_0(T) \ \forall T \in \Th\}$
for the lowest order N\'ed\'elec finite element space with zero boundary conditions~\cite{Nedelec1980,Boffi2013}. This space admits a stable splitting 
\begin{align} \label{eq:split_h}
\tilde V_h = \nabla S_h \oplus V_h
\end{align}
where $S_h = \{s_h \in H_0^1(\Omega) : s_h |_T \in P_1(T) \ \forall T \in \Th\}$ is the standard piecewise linear finite element space~\cite{Monk2003,Boffi2013}. A basis for the space $V_h$ can be obtained, e.g. by tree-cotree splitting~\cite{Albanese1988,Rapetti2022}. 
We further define $Q_h = P_0(\Th)^3$ to denote the space of piecewise constant vector functions. 

For discretization of our model problem, we then simply use the approximation of the minimization problem \eqref{eq:min} in the appropriate finite element spaces. This amounts to solving
\begin{align} \label{eq:min_h}
\min_{\A_h \in V_h} \min_{\J_h \in Q_h} \int_\Omega \frac{\nu_0}{2} |\curl \A_h &- \J_h|^2 - \langle \j_s, \A_h\rangle \\
&+ U(\J_h) + \chi |\J_h-\J_p|_\eps \, dx. \notag
\end{align}
As immediate consequence of the problem structure, we can establish the well-posedness of this discretization strategy.
\begin{theorem}
Let Assumption~\ref{ass:1} hold and $V_h$, $Q_h$ be defined as described above. Then for any $\eps \ge 0$ and $\J_p \in Q_h$, problem~\eqref{eq:min_h} admits a unique solution.     
\end{theorem}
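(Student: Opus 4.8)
The plan is to mirror the proof of the continuous well-posedness result (the first Assertion), exploiting that \eqref{eq:min_h} has exactly the structure analysed in \eqref{eq:min}, now posed over the finite-dimensional subspaces $V_h \times Q_h$; the two linear terms agree by the integration-by-parts identity $\langle \js, \A_h\rangle = \langle \hs, \curl \A_h\rangle$, which is valid for $\A_h \in V_h \subset H_0(\curl;\Omega)$ since the boundary trace of $\A_h$ vanishes. Denote the objective in \eqref{eq:min_h} by $F(\A_h, \J_h)$. First I would record its effective domain: since $U$ from Assumption~\ref{ass:1} tends to $+\infty$ as $|\J_h| \to J_s$, the functional $F$ is finite precisely when $|\J_h| < J_s$ on every element, and it is proper because $F(0,0) < \infty$.

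The core step is to verify strong convexity of $F$ on $V_h \times Q_h$. A direct computation shows that $U$ has second derivative bounded below by a positive constant of size $\pi A_s/(2 J_s)$, so $U(\J_h)$ is strongly convex in $\J_h$; the quadratic term $\frac{\nu_0}{2}|\curl\A_h - \J_h|^2$ is convex in the combination $\curl\A_h - \J_h$; and $\chi\,|\J_h - \J_p|_\eps$ is convex. Combining the $U$-term with the quadratic term through Young's inequality, by splitting $\|\curl\delta\A_h - \delta\J_h\|^2$ into suitably weighted contributions of $\|\curl\delta\A_h\|^2$ and $\|\delta\J_h\|^2$, yields a lower bound of the form $c\,(\|\curl\delta\A_h\|^2 + \|\delta\J_h\|^2)$ with $c>0$ along every direction $(\delta\A_h,\delta\J_h)\neq 0$. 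Here I would invoke the decisive structural fact that $\|\curl\cdot\|$ is a norm on $V_h$ --- the discrete analogue of the role played by $V$ in the continuous setting --- which follows from the stable splitting \eqref{eq:split_h}; on the finite-dimensional space $V_h$ this norm is equivalent to any other, so strong convexity in $\curl\A_h$ upgrades to strong convexity in $\A_h$.

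With strong convexity in hand, existence and uniqueness follow from elementary convex analysis, which is in fact lighter than in the continuous case because $V_h \times Q_h$ is finite-dimensional. I would then check that $F$ is lower semicontinuous --- it is continuous on the interior of its domain and $U$ acts as a closed convex barrier --- and coercive: the effective domain confines $\J_h$ to the bounded set $\{|\J_h| < J_s\}$, so the only unbounded directions lie in $\A_h$, where the quadratic term grows like $\|\curl\A_h\|^2$ and dominates the linear term $-\langle\js,\A_h\rangle$. Coercivity together with lower semicontinuity gives compact sublevel sets and hence attainment of the minimum by the Weierstrass theorem, while strict convexity (a consequence of strong convexity) yields uniqueness of the minimizer.

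The main obstacle I anticipate is not the convexity estimate but the open, bounded nature of the effective domain induced by $U$: because the admissible set $\{|\J_h| < J_s\}$ is open, I must use the barrier property of $U$ (its blow-up as $|\J_h| \to J_s$) to guarantee that a minimizing sequence stays in a compact subset of the open domain and that its limit remains feasible, rather than escaping to the boundary. The second point requiring care is the injectivity of $\curl$ on $V_h$, on which both the coercivity and the strong convexity in $\A_h$ entirely rest; this is exactly what the stable discrete splitting \eqref{eq:split_h} provides, and everything else reduces to routine verification.
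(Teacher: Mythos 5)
Your proposal is correct and follows essentially the same route as the paper, whose proof is a one-line appeal to strong convexity over the finite-dimensional spaces $V_h$, $Q_h$; you simply spell out the details that the paper treats as standard (the identity $\langle \js,\A_h\rangle=\langle \hs,\curl\A_h\rangle$, the barrier/effective-domain issue for $U$, the weighted Young's inequality giving joint strong convexity in $(\curl\A_h,\J_h)$, and the injectivity of $\curl$ on $V_h$ from the splitting \eqref{eq:split_h}). All of these verifications are accurate, so your write-up can be read as an expanded version of the paper's argument rather than a different one.
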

\begin{proof}
Formula~\eqref{eq:min_h} amounts to a strongly convex minimization problem over the finite dimensional spaces $V_h$, $Q_h$. Existence of a unique solution thus follows immediately. 
\end{proof}

\begin{remark}
For ease of presentation, we here only consider the simplest case of a possible approximation scheme. Higher order approximations, curved meshes, numerical quadrature, etc. can be considered with very similar arguments~\cite{Egger2024ho}. As shown in this reference, the variational structure of the problem is also beneficial for a systematic error analysis and the design and the convergence analysis of iterative solvers.
\end{remark}

We now turn to the iterative solution of \eqref{eq:min} and \eqref{eq:min_h}. 
For ease of presentation, we discuss the methods on the continuous level. Since the structure of the minimization problem is preserved by discretization, all results also hold for the finite element approximations.
Moreover, the used arguments show that convergence of the proposed methods will hold independent of the mesh size.

\subsection{A regularized Newton method}

We assume $\eps>0$, such that the integrand in \eqref{eq:min} is strongly convex and at least two-times differentiable. For minimization, we can then employ the Newton method with line search~\cite{Nocedal2006}. 
For given iterate $(\A^n,\J^n)$, we compute the Newton update direction $(\delta \A^n,\delta \J^n)$ by solving the linearized system 
\begin{align}
\curl (\nu_0 (\curl \delta \A^n - \delta \J^n)) &= \f^n, \label{eq:new1} \\
- \nu_0 \curl \delta \A^n + \nu^n \delta \J^n &= \g^n. \label{eq:new2}
\end{align}
The gauging and boundary conditions for the vector potential $\A$ are included automatically in the weak form of the problem.
The right hand sides in these equations amount to the residuals in the optimality system \eqref{eq:opt1} and \eqref{eq:opt2}, and hence read
\begin{align}
\f^n &= \j_s - \curl(\nu_0 (\curl \A^n - \J^n)) \label{eq:new3} \\
\g^n &= \nu_0 (\curl \A^n - \J^n) - \nabla_\J U(\J) - \chi \tfrac{\J^n - \J_p}{|\J^n - \J_p|_\eps}. \label{eq:new4}
\end{align}
The reluctivity tensor $\nu^n$ in \eqref{eq:new2} stems from differentiating \eqref{eq:opt2} with respect to $\J$, and is thus given by 
\begin{align} \label{eq:new5}
\nu^n = \nu_0 \text{I} + \nabla_{\J\J} U(\J) + 
\frac{\chi}{|\J-\J_p|_\eps} (I - \v \otimes \v), 
\end{align}
with scaled vector $\v = \frac{\J - \J_p}{|\J-\J_p|_\eps}$ used for abbreviation. 
The next Newton-iterate is finally defined by the update rule
\begin{align} \label{eq:new6}
\A^{n+1} = \A^n + \tau^n \delta \A^n, \quad 
\J^{n+1} = \J^n + \tau^n \delta \J^n
\end{align}
with appropriate step size $\tau^n>0$. In our numerical tests, we employ \emph{Armijo back-tracking} with parameters $q=0.5$ and $\sigma=0.1$, which defines the step size $\tau^n$ according to \cite{Nocedal2006}
\begin{align} \label{eq:new7}
\tau^n = \max\{ \tau = q^n : \Phi(\tau) \le \Phi(0) + \sigma \tau \Phi'(0), \ n \ge 0\}.
\end{align}
Here 
%$\Phi(\tau) = \int_\Omega \frac{\nu_0}{2} |\curl \A - \J|^2 - \js \cdot \A + U(\J) + \chi |\J-\J_p| \, dx$ 
$\Phi(\tau)$ denotes the value of the functional to be minimized in \eqref{eq:min} at $\A=\A^{n} + \tau \delta \A^n$ and $\J=\J^n + \tau \delta \J^n$.
Convergence of the iteration \eqref{eq:new1}--\eqref{eq:new7} follows from standard arguments. 
\begin{theorem}
Let Assumption~\ref{ass:1} hold and $\eps > 0$. Then for any $\A^0 \in V$, $\J^0,\J_p \in Q$, the Newton-method with Armijo line-search defines a sequence of iterates $\A^n \in V$, $\J^n \in Q$, $n \ge 1$, which converges at least linearly to the minimizer of \eqref{eq:min}. 
\end{theorem}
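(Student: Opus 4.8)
The plan is to view \eqref{eq:min} as the unconstrained minimization of a single functional $\mathcal{F}(\A,\J)$ over the Hilbert space $V\times Q$ and then to invoke the classical global convergence theory for damped Newton methods with Armijo backtracking~\cite{Nocedal2006}. For $\eps>0$ the integrand is $C^\infty$ on its domain, so the whole argument reduces to verifying three properties on the relevant region: $\mathcal{F}$ is uniformly strongly convex, its gradient is Lipschitz, and the iterates never leave that region.

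First I would quantify the strong convexity already announced in the well-posedness proof. Evaluating the second variation of $\mathcal{F}$ at $(\A,\J)$ in a direction $(\delta\A,\delta\J)$ gives
\[
\nu_0\|\curl\delta\A-\delta\J\|^2 + \langle \nabla_{\J\J}U(\J)\,\delta\J,\delta\J\rangle + \tfrac{\chi}{|\J-\J_p|_\eps}\langle (I-\v\otimes\v)\delta\J,\delta\J\rangle,
\]
in which the first and last terms are nonnegative and $\nabla_{\J\J}U\ge c\,I$ with $c>0$ by the explicit form of $U$ in Assumption~\ref{ass:1}. Using $\|\curl\delta\A\|^2\le 2\|\curl\delta\A-\delta\J\|^2+2\|\delta\J\|^2$ together with the fact that $\|\curl\cdot\|$ is a norm on $V$ (stable splitting) yields a lower bound $c'(\|\curl\delta\A\|^2+\|\delta\J\|^2)$, i.e. uniform strong convexity. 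In particular the reluctivity operator $\nu^n$ of \eqref{eq:new5} and the full linearized system \eqref{eq:new1}--\eqref{eq:new2} are boundedly invertible, so the Newton direction $(\delta\A^n,\delta\J^n)$ is well defined, and it is a strict descent direction since $\Phi'(0)=\langle\nabla\mathcal{F},(\delta\A^n,\delta\J^n)\rangle=-\langle\nabla^2\mathcal{F}\,(\delta\A^n,\delta\J^n),(\delta\A^n,\delta\J^n)\rangle<0$ away from the minimizer. Hence the Armijo condition \eqref{eq:new7} holds for all sufficiently small $\tau$, the backtracking terminates after finitely many steps, and the objective values $\mathcal{F}(\A^n,\J^n)$ decrease monotonically.

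The delicate point, which I expect to be the main obstacle, is that $\nabla_{\J\J}U$ blows up as the pointwise bound $|\J|\uparrow J_s$ is approached, so the gradient is only locally Lipschitz and the Hessian is not uniformly bounded on all of $Q$. Here I would exploit the barrier character of $U$: since $U(\J)\to+\infty$ as $|\J|\to J_s$, the initial sublevel set $\{\mathcal{F}\le\mathcal{F}(\A^0,\J^0)\}$ is bounded and stays uniformly away from the singular set $\{|\J|=J_s\}$. The monotone decrease established above confines every iterate to this sublevel set, on which the Hessian is simultaneously uniformly positive definite and uniformly bounded; consequently $\nabla\mathcal{F}$ is Lipschitz along the whole iteration, with constants independent of $n$ and, for the discrete problem, independent of the mesh.

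With these uniform bounds the conclusion follows from standard arguments. Combining the descent estimate with the lower bound on $\tau^n$ produced by backtracking gives a uniform sufficient-decrease inequality $\mathcal{F}(\A^{n+1},\J^{n+1})\le\mathcal{F}(\A^n,\J^n)-\gamma\|\nabla\mathcal{F}(\A^n,\J^n)\|^2$ with $\gamma>0$, whence $\nabla\mathcal{F}(\A^n,\J^n)\to0$; strong convexity then forces $(\A^n,\J^n)$ to converge to the unique minimizer of \eqref{eq:min}. For the rate I would use the Polyak--{\L}ojasiewicz and quadratic-growth inequalities implied by $\mu$-strong convexity, namely
\[
\tfrac{\mu}{2}\|x-x^\ast\|^2\le\mathcal{F}(x)-\mathcal{F}^\ast\le\tfrac{1}{2\mu}\|\nabla\mathcal{F}(x)\|^2,
\]
together with the sufficient-decrease inequality to obtain $\mathcal{F}(\A^{n+1},\J^{n+1})-\mathcal{F}^\ast\le\rho\,(\mathcal{F}(\A^n,\J^n)-\mathcal{F}^\ast)$ for some $\rho\in(0,1)$. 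This is Q-linear decay of the objective error, and the quadratic-growth bound upgrades it to at least R-linear convergence of the iterates. Since every constant depends only on the uniform Hessian bounds on the initial sublevel set, the identical statement and rate transfer verbatim to the discrete problem \eqref{eq:min_h}.
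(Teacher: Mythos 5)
Your overall route---recast \eqref{eq:min} as unconstrained minimization of a strongly convex functional and invoke the standard global-convergence theory for damped Newton with Armijo backtracking---is exactly the route the paper takes (its proof is essentially a citation of Nocedal--Wright for the discrete case and of Heid and Egger et al.\ for the infinite-dimensional one), and your strong-convexity computation is sound: $\nabla_{\J\J}U \ge \tfrac{\pi A_s}{2J_s}I$ and a (weighted) Young inequality do give a lower bound $c'(\|\curl\delta\A\|^2+\|\delta\J\|^2)$. The genuine gap is in the step you yourself identify as the delicate one. Your claim that the initial sublevel set ``stays uniformly away from the singular set $\{|\J|=J_s\}$'' is false on the continuous level, where the theorem is stated. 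The barrier is only logarithmic: writing $t=J_s-|\J|$, one has $U(\J)\sim \tfrac{2A_sJ_s}{\pi}|\log t|$ while $\nabla_{\J\J}U(\J)\sim t^{-2}$ (note this has nothing to do with $\eps$, which only smooths the pinning term). Since a logarithmic singularity is integrable, there exist fields in any sublevel set of $\mathcal{F}$ whose modulus touches $J_s$, e.g.\ $J_s-|\J(x)|=|x-x_0|^{\alpha}$ near a point $x_0$: the energy $\int_\Omega U(\J)\,dx$ stays finite, but the Hessian is unbounded as an operator on $Q=L^2(\Omega)^3$, and for $\alpha$ large enough even $\nabla U(\J)\notin L^2$, so $\mathcal{F}$ is not G\^ateaux differentiable there. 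Consequently there is no uniform Hessian bound or Lipschitz constant for $\nabla\mathcal{F}$ on sublevel sets in $V\times Q$, and the classical theory cannot be invoked verbatim. (A smaller instance of the same problem: for arbitrary $\J^0\in Q$, as the statement permits, $\mathcal{F}(\A^0,\J^0)$ may equal $+\infty$, so your sublevel-set argument cannot even start.)

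In finite dimensions your argument does close---a finite function value forces $|\J_T|<J_s$ on every element, and the sublevel set is compact inside the open domain of $\mathcal{F}$---but the resulting constants are mesh-dependent: $|T|\,U(\J_T)\le C$ only yields $J_s-|\J_T|\ge e^{-C'/|T|}$, so the Hessian bound degenerates exponentially under refinement. This contradicts your final claim that the rate transfers to \eqref{eq:min_h} with mesh-independent constants, and it is precisely why the paper splits its proof into two citations rather than one. A rigorous way to repair your argument, consistent with the paper's Remark~\ref{rem:reduction}, is to eliminate $\J$ first by partial minimization: the reduced functional of $\B=\curl\A$ is a Moreau-envelope-type inf-convolution with $\tfrac{\nu_0}{2}|\cdot|^2$, hence its gradient (the inverse hysteresis operator) is globally Lipschitz with constant $\nu_0$ and strongly monotone with constants depending only on $\nu_0$, $A_s$, $J_s$, $\chi$; running the damped-Newton analysis on this reduced problem---whose Newton step coincides with the Schur-complement step \eqref{eq:red}---yields the global, mesh-independent linear convergence that your direct sublevel-set argument cannot deliver.
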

\begin{proof}
The discrete setting is fully covered by the results in \cite{Nocedal2006}. The arguments presented in \cite{Heid2023,egger2024global} allow to treat also the infinite dimensional problem in a rigorous manner.
\end{proof}
\begin{remark} \label{rem:reduction}
Equation \eqref{eq:new2} is local in space which allows to compute $\delta \J^n = (\nu^n)^{-1} (\g^n + \nu_0 \curl \delta \A^n)$ explicitly. Inserting this into \eqref{eq:new1} leads to a single linear magnetic field equation 
\begin{align} \label{eq:red}
\curl(\tilde \nu^n \delta \A^n) &= \tilde \f^n
\end{align}
with reluctivity tensor $\tilde \nu^n = \nu_0 (I - (\nu^n)^{-1} \nu_0)$ and right hand side $\tilde \f^n=\f^n + \nu_0 (\nu^n)^{-1} \g^n$. 
The increments $\delta \A^n$ of the magnetic vector potential are thus characterized by standard linear magnetostatic problem which have the same form as those arising in nonlinear magnetic field computations without hysteresis. 
The main computational effort required to realize one Newton-step in the above iteration, i.e., the computation of $\delta \A^n$, is thus not affected by the presence of hysteresis.
\end{remark}

\subsection{Block-coordinate descent}
\label{sec:fixpoint}
On the abstract level, the two minimization problems \eqref{eq:min} and \eqref{eq:min_h} have the common abstract form 
\begin{align} \label{eq:min2}
\min_\A \min_\J f(\A,\J),
\end{align}
with $f$ denoting an appropriate strongly convex functional. 
A well-known strategy for solving such problems is to minimize separately with respect to $\A$ and $\J$ in alternating order. 
This amounts to first updating
$ 
\A^{n+1} = \arg\min\nolimits_\A f(\A,\J^{n})$ and then $\J^{n+1} = \arg\min\nolimits_\J f(\A^{n+1},\J)$. Convergence of such schemes can be established in rather general situations~\cite{Carstensen1997}.

For the problem under consideration, the block-coordinate descent algorithm leads to the update equations
\begin{align}
&\curl (\nu_0 \curl \A^{n+1}) = \j_s + \curl (\nu_0 \J^{n}), \label{eq:bcd1}\\
&\min\nolimits_{\J} U(\J) + \frac{\nu_0}{2} |\curl \A^{n+1} - \J|^2 + \chi |\J - \J_p|_\eps. \label{eq:bcd2}
\end{align}
Similar to before, we search for weak solutions $\A^{n+1} \in V$ and $\J^{n+1} \in Q$ for these equations, and thus automatically take account of the corresponding boundary and gauging conditions. 
The particular structure of the underlying variational problem allows to establish the following convergence result. 
\begin{theorem} \label{thm:bcd}
Let Assumption~\ref{ass:1} hold and $\eps \ge 0$. Then for any $\A^0 \in V$ and $\J^0 \in Q$, the weak solutions of \eqref{eq:bcd1}--\eqref{eq:bcd2}, define a sequence $\A^n \in V$ and $\J^n \in Q$ for $n \ge 1$.
Moreover, the iterates converge linearly to the unique solution of \eqref{eq:min}. 
\end{theorem}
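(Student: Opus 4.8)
The plan is to exploit the joint strong convexity of the functional $f$ in \eqref{eq:min2} together with the optimality of the two partial minimizations, which combine into a linear contraction of the objective gap.

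I would first record the facts that make the scheme well defined. For fixed $\J^n$, the map $\A \mapsto f(\A,\J^n)$ is, up to an affine term, the quadratic $\frac{\nu_0}{2}\|\curl \A\|^2$, which is $\nu_0$-strongly convex on $V$ because $\|\curl \cdot\|_{L^2}$ is a norm on $V$ under Assumption~\ref{ass:1}; hence \eqref{eq:bcd1} has a unique weak solution $\A^{n+1}\in V$. For fixed $\A^{n+1}$, the map $\J \mapsto f(\A^{n+1},\J)$ is strongly convex in $\J$ (with modulus at least $\nu_0$, reinforced by the strong convexity of $U$, whose Hessian is bounded below by a constant $\mu_U>0$ on its effective domain $\{|\J|<J_s\}$); hence \eqref{eq:bcd2} admits a unique, pointwise determined minimizer $\J^{n+1}\in Q$. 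The structural lemma I would establish next is that $f$ is in fact \emph{jointly} strongly convex on $V \times Q$ with respect to the norm $(\|\curl\cdot\|^2+\|\cdot\|^2)^{1/2}$. This is the crux, since the coupling term $\frac{\nu_0}{2}\|\curl\A-\J\|^2$ is only positive semidefinite, being degenerate along $\curl\A=\J$. Writing the Hessian form as $\nu_0\|\curl\delta\A-\delta\J\|^2+\langle\nabla^2 U\,\delta\J,\delta\J\rangle$ (the regularization Hessian is positive semidefinite and may be dropped) and using the Young-type bound $\|\curl\delta\A-\delta\J\|^2\ge(1-t)\|\curl\delta\A\|^2-(t^{-1}-1)\|\delta\J\|^2$ for $t\in(0,1)$, one chooses $t$ close to $1$ so that $\nu_0(1-t)>0$ and $\mu_U-\nu_0(t^{-1}-1)>0$ simultaneously, which yields a joint modulus $m>0$.

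Next I would establish a sufficient-decrease estimate. Since $\A^{n+1}$ and $\J^{n+1}$ minimize the respective strongly convex partial problems, the standard inequality $g(x)-g(x^+)\ge\frac{\mu}{2}\|x-x^+\|^2$ for the minimizer $x^+$ of a $\mu$-strongly convex $g$ gives
\[
f(\A^{n},\J^{n})-f(\A^{n+1},\J^{n})\ge\tfrac{\nu_0}{2}\|\curl(\A^{n}-\A^{n+1})\|^2,\qquad f(\A^{n+1},\J^{n})-f(\A^{n+1},\J^{n+1})\ge\tfrac{\nu_0}{2}\|\J^{n}-\J^{n+1}\|^2.
\]
In particular the objective values are nonincreasing and, being bounded below, convergent, so the differences $\|\J^n-\J^{n+1}\|$ tend to zero.

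The decisive step is an error bound relating the distance to optimality to the block step. Here I would construct a subgradient of $f$ at the new iterate whose norm is controlled by $\|\J^n-\J^{n+1}\|$. The $\J$-optimality of \eqref{eq:bcd2} gives $0\in\partial_\J f(\A^{n+1},\J^{n+1})$, valid for all $\eps\ge0$ via \eqref{eq:opt2b} in the nonsmooth case, so the $\J$-component of a subgradient may be taken to vanish; subtracting the $\A$-optimality $\partial_\A f(\A^{n+1},\J^{n})=0$ from $\partial_\A f(\A^{n+1},\J^{n+1})$ leaves $\nu_0\,\curl^{\ast}(\J^{n}-\J^{n+1})$, of dual norm at most $\nu_0\|\J^{n}-\J^{n+1}\|$. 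Feeding this subgradient into the Polyak--{\L}ojasiewicz inequality $f(\A^{n+1},\J^{n+1})-f^{\ast}\le\frac{1}{2m}\|g\|^2$, which follows from joint $m$-strong convexity and holds verbatim for subgradients, bounds the gap $e_{n+1}:=f(\A^{n+1},\J^{n+1})-f^{\ast}$ by $\frac{\nu_0^2}{2m}\|\J^{n}-\J^{n+1}\|^2$. Combined with the sufficient-decrease estimate $e_n-e_{n+1}\ge\frac{\nu_0}{2}\|\J^n-\J^{n+1}\|^2$ this yields $e_n-e_{n+1}\ge\frac{m}{\nu_0}e_{n+1}$, i.e. $e_{n+1}\le(1+m/\nu_0)^{-1}e_n$, which is linear convergence of the objective; joint strong convexity then converts it into $\|\curl(\A^n-\A^\ast)\|^2+\|\J^n-\J^\ast\|^2\le\frac{2}{m}e_n$, giving linear convergence of the iterates. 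I expect the main obstacle to be exactly this error-bound step, namely producing the controlled subgradient at $(\A^{n+1},\J^{n+1})$ and ensuring the subgradient form of the Polyak--{\L}ojasiewicz inequality applies in the nonsmooth limit $\eps=0$; the joint strong convexity lemma is the accompanying structural difficulty, since the coupling quadratic alone is degenerate.
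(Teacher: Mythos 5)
Your proposal is correct in substance, but it proceeds by a genuinely different route than the paper: the paper's entire proof of Assertion~\ref{thm:bcd} is a single citation, deferring to the convergence theory of Carstensen~\cite{Carstensen1997} for alternating minimization of non-smooth convex functionals, which applies verbatim to the structure at hand (a smooth, jointly convex quadratic coupling plus a convex non-smooth part depending on one block only). You instead assemble a self-contained argument: unique solvability of each half-step by partial strong convexity, a joint strong-convexity lemma obtained from the Young-type splitting of the degenerate coupling quadratic against the strong convexity of $U$ (whose Hessian is indeed bounded below, by $\tfrac{\pi A_s}{2 J_s}I$, on the effective domain $\{|\J|<J_s\}$), a sufficient-decrease estimate, a controlled subgradient at the new iterate, and the subgradient form of the Polyak--{\L}ojasiewicz inequality. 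What your route buys is an explicit contraction factor $(1+m/\nu_0)^{-1}$ and full visibility of which structural features are actually used (in particular, that strong convexity of $U$ is essential, since the coupling term alone is degenerate along $\curl\A=\J$); what the citation buys is brevity and a ready-made theory that already covers the non-smooth case $\eps=0$ and the infinite-dimensional setting.

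Two steps in your outline need explicit justification before the argument is airtight. The decisive one, which you yourself flag as the main obstacle, is the claim that the vanishing partial subgradient in $\J$ and the partial gradient in $\A$ combine into a \emph{joint} subgradient of $f$ at $(\A^{n+1},\J^{n+1})$: for general convex functions this fails, since $\partial f(\A,\J)$ may be a proper subset of $\partial_\A f(\A,\J)\times\partial_\J f(\A,\J)$. Here it is legitimate only because of the separable structure: writing $f=F+G$ with $F$ the Fr\'echet-differentiable coupling and source terms and $G(\J)=\int_\Omega U(\J)+\chi|\J-\J_p|_\eps\,dx$ depending on $\J$ alone, the Moreau--Rockafellar sum rule (valid since $F$ is continuous) gives $\partial f(\A,\J)=\{\nabla F(\A,\J)\}+\{0\}\times\partial G(\J)$; choosing $\xi\in\partial G(\J^{n+1})$ with $\nabla_\J F(\A^{n+1},\J^{n+1})+\xi=0$, which the optimality of \eqref{eq:bcd2} provides for all $\eps\ge 0$, produces exactly your subgradient $\bigl(\nu_0\curl^{*}(\J^{n}-\J^{n+1}),\,0\bigr)$, and the rest of your chain of inequalities goes through. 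Second, a minor initialization point: for arbitrary $\J^0\in Q$ one may have $f(\A^0,\J^0)=+\infty$ (if $|\J^0|\ge J_s$ on a set of positive measure); the iteration is nevertheless well defined because \eqref{eq:bcd1} is a linear problem that does not require finiteness of $f$, and your estimates hold, trivially or properly, from $n=1$ onwards, which suffices for the conclusion of convergence to the unique minimizer of \eqref{eq:min}.
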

\begin{proof}
The results follow immediately from those in~\cite{Carstensen1997}.
\end{proof}

\begin{remark}
The interpretation of \eqref{eq:bcd1}--\eqref{eq:bcd2} as a block-coordinate descent algorithm for the convex minimization problem \eqref{eq:min} here allows for a rigorous convergence analysis. The iteration \eqref{eq:5}--\eqref{eq:6}, mentioned in the introduction, has a similar structure, but the function to be minimized in the second step is slightly different. This method hence does not make use of the underlying variational problem and its convergence behaviour, therefore, seems at least unclear. 
\end{remark}

\section{Numerical illustration}
\label{sec:num}

To demonstrate the feasibility and efficiency of the proposed methods, we now report about the simulation of the magnetic fields in the T-joint of a three-limb transformer. The test problem under consideration is motivated by~\cite{Gyselinck2004,Jacques2015}. 

\subsection{Model problem}
The specific setup of the problem allows to perform the computations on a two-dimensional cross-section $\Omega \subset \RR^2$ of the device which is depicted in Figure \ref{fig:geometry:tjoint}. 
\begin{figure}[!ht]
    \centering
    \includegraphics[trim = {1cm 1cm 0cm 0.5cm},clip,width=0.7\linewidth]{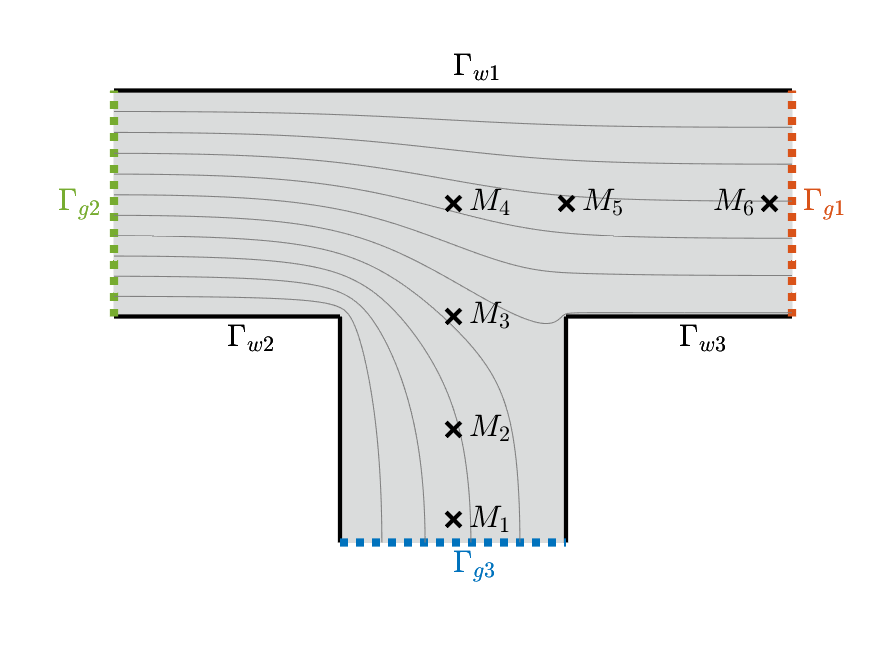}
    \vspace{-0.4cm}
    \caption{2D cross-section of the T-joint of a transformer taken from \cite{Gyselinck2004,Jacques2015}. On the flux walls $\Gamma_w$ (black) the boundary condition $\B \cdot \mathbf{n} = 0$ is imposed, while on the flux gates $\Gamma_g$ (green, red, blue) we prescribe $\H \times \mathbf{n} = 0$. In addition, a time-varying flux $\phi$ is imposed at these gates. 
    Field values at the measurement points $M_1$--$M_6$ will be reported below.}
    \label{fig:geometry:tjoint}
\end{figure}
The vector potential then has the form $\A=(0,0,A_z)$ and the magnetic field and flux are given by $\H=(H_x,H_y,0)$ and $\B=(B_x,B_y,0)$. All field components are independent of the $z$-coordinate. 

On the flux walls $\Gamma_{w\ell}$ of the geometry, the boundary condition $\B \cdot \n = 0$ is imposed, while on the flux gates $\Gamma_{g\ell}$, we require $\H \times \n = 0$ and additionally prescribe the total flux $\Phi_\ell = \int_{\Gamma_{g\ell}} \B \cdot \n \, ds$. The condition
$\sum_\ell \Phi_\ell=0$ is required to stay compatible with the magnetic Gau\ss' law $\div \B=0$. 

The energy-based magnetic hysteresis model with $5$ pinning forces presented in \cite{Lavet2013} is used to describe the ferromagnetic material behaviour. 
The total polarization $\J=\sum_k \J_k$ thus is split into $5$ partial polarizations whose updates are defined by 
\begin{align*}
\min_{\J_k} U_k(\J_k) - \langle \H,\J_k\rangle + \chi_k |\J_k - \J_{k,p}|_\eps
\end{align*}
The functions $U_k(\J_k) = - \frac{A_s J_{s,k}}{\pi} \log \left(\cos \left(\frac{\pi}{2} \frac{|\J_k|}{J_{s,k}}\right)\right)$ are used as internal energy densities. The parameters in the model used in our simulations are summarized in the following table.
\begin{table}[h!]
\centering
\small
\caption{Material data for 5-cell model from 
\cite{Lavet2013}.}
\label{tab:material:lavet}
\vspace{-0.2cm}
\begin{tabular}{@{}cll@{}}
\toprule 
\textbf{parameter} & \textbf{values} & \textbf{description}\\
\midrule
$A_s$ & 65 & field strength \\
$J_{s,k}$  & 0.11, 0.3, 0.44, 0.33, 0.04 & reference polarization\\
$\chi_k$  & 0, 10, 20, 40, 60 & pinning strength\\
\bottomrule
\end{tabular}
\end{table}
\vspace{-0.2cm}
\subsection{Vector Potential Formulation} 
To incorporate the specific boundary conditions mentioned above, we split the relevant out of plane component of the magnetic vector potential as 
\begin{align*}
A_z = A_{z,g} + A_{z,0}
\end{align*}
with $A_{z,0} \in W = \{w \in H^1(\Omega) : w = 0 \text{ on } \Gamma_{w\ell}\}$ and remainder $A_{z,g} \in \{w \in H^1(\Omega) : w = c_\ell \text{ on } \Gamma_{w\ell} = c_\ell, \ c_\ell \in \RR \}$. 
The in-plane components of the magnetic flux are then defined by 
\begin{align*}
\B = \binom{B_x}{B_y} = \Curl A_z = \binom{\partial_y A_z}{-\partial_x A_z}.
\end{align*}
Here $\Curl$ is the scalar-to-vector curl in two dimensions. 
The function $A_{z,g}$ can be constructed such that $\div \B=0$ and the required boundary conditions  hold for any choice of $A_{z,0}$ in the  decomposition~\eqref{eq:split}; see~\cite{Jacques2015,egger2025semi}. 
The variational formulation of the corresponding magnetic field problem then reads
\begin{align} \label{eq:mink}
&\min_{A_{z,0} \in W} \min_{\J_k \in Q} \int_\Omega \frac{\nu_0}{2} |\Curl (A_{z,0} + A_{z,g}) - \sum\nolimits_k \J_k|^2 \\
&\qquad \qquad \qquad \qquad + \sum\nolimits_k U_k(\J_k) + \chi_k |\J_k - \J_{k,p}|_\eps \, dx. \notag
\end{align}
All results presented in the previous section generalize immediately to this problem. In particular, problem \eqref{eq:mink} admits a unique solution and $\B=\Curl \A_z$, $\H=\nu_0 (\B - \sum_j \J_k)$ satisfy $\div \B=0$ and $\curl \H=0$ as well as the mixed boundary conditions stated above. 
Moreover, the problem is suitable for discretization by finite elements and iterative solution by the regularized Newton or block-coordinate descent methods. 

\subsection{Details on the implementation}
We first describe the finite element approximation used in our numerical tests. To do so, let $\Th$ be a regular triangulation of the domain $\Omega$ and let 
$$
W^h = W \cap P_1(\Th)
$$
denote the space of piecewise linear functions over $\Omega$ which are continuous across element interfaces and vanish on the flux walls $\Gamma_{w\ell}$. The partial polarizations are approximated in the spaces $Q^h = P_0(\Th)^2$ of piecewise constant vector fields. 
The discrete problem amounts to minimizing \eqref{eq:mink} over $A_{z,0}^h \in W_h$ and $\J_k^h \in Q^h$. Existence of a unique solution again follows immediately from convexity of the problem. 

Application of the regularized Newton method and the block coordinate-descent method outlined in the previous section is now straight forward. Since the space $Q_h$ used for the approximations $\J_k^h$ of the partial polarizations consists of piecewise constant vector fields, the elimination procedure described in Remark~\ref{rem:reduction} can be carried out verbatim on the discrete level. Also the minimization problems for computing $(\J_k^h)^{n+1}$ in the block-coordinate descent method, see \eqref{eq:bcd2}, decouple and can be solved separately for every element of the mesh $\Th$. 
As a consequence, the main numerical effort for performing one iteration step in the two methods consists of solving the discretized version of a linearized magneto-static field problem of the form \eqref{eq:red} respectively \eqref{eq:bcd1}. 

All implementations were done in \textsc{Matlab} and performed on a laptop equipped with a 13th Gen Intel Core i5-1335U CPU with clock rate 1.30 GHz. The built-in sparse direct solvers are used for solution of the linear systems arising from discretization of \eqref{eq:red} and \eqref{eq:bcd1}. 
Iterations are stopped, when the difference in the function value is smaller than $10^{-6} f_0$ with $f_0$ the function value at the start of the iteration.

\subsection{Numerical results}

We simulate a typical load cycle starting from a completely demagnetized state with initial polarizations $\J_{k}(0) = 0$.
The fluxes at the gates $\Gamma_{g\ell}$ are chosen as $\Phi_\ell(t) = \cos(2\pi t + 2\pi\ell/3) \cdot f(t)$ with amplitude factor $f(t)=(1- \cos(4 \pi t ))/2$ for $t \le 1/4$ and $f(t)=1$ for $t \ge 1/4$;  see \cite{Gyselinck2004}. Note that $\sum_\ell \Phi_\ell=0$ for all $t$ by construction.
We partition the time interval $[0,2]$ into 200 equidistant time-steps $t^n = n \tau$ with $\tau = 0.02$ and determine $A_{z,g}$ corresponding to the fluxes $\Phi_\ell(t^n)$, as described above. We further update $\J_{k,p}(t^{n}) = \J_k(t^n)$ from the previous time step.
The values $A_{z,0}^h(t^n)$, $\J_k^h(t^n)$ for the new time step $t^n$, $n \ge 1$, are then computed by solving \eqref{eq:mink} numerically. 

The results of our computations are summarized in Table~\ref{tab:1}. As expected both methods converge globally and mesh-independent with constant average iteration counts. The Newton method outperforms the block coordinate descent method significantly. We also showcase the resulting iron losses, since the power loss density in the current time-step is directly available as $\frac{1}{\tau}\sum_k \chi_k |\J_k - \J_{k,p}|$, for which we assumed a transformer depth of $1$m.
\begin{table}[ht!]
\centering
\small
\caption{Total computation times and average iteration numbers for the Newton Method and Block Coordinate Descent on different mesh refinement levels and 200 time-steps. }
\begin{tabular}{c || cc | cc | c }
\toprule 
\textbf{dof} 
& \multicolumn{2}{c}{\textbf{Newton Method}} 
& \multicolumn{2}{c}{\textbf{Block Coordinate}} 
& \textbf{Iron Losses}   \\
\midrule
$570$    & $9.92$ s  & $8.40$ it & $812$ s& $722$ it& $698.1$ J \\
$2170$ & $34.3$ s & $8.25$ it & $2930$ s & $723$ it & $697.4$ J  \\
$8463$ & $155$ s & $8.32$ it & $12.34$k s & $724$ it & $697.0$ J \\
$33421$ & $1029$ s & $8.06$ it & $52.18$k s & $727$ it & $696.8$ J \\
\bottomrule
\end{tabular}
\label{tab:1}
\end{table}
The locus curves for steady state operation in the time interval $[1,2]$ are illustrated in Figure \ref{fig:hxhy}. Clearly, the typical hysteretic behaviour and saturation effect are captured in the magnetic field simulation.
\begin{figure}[!ht]
    \centering
    \includegraphics[trim = {1.7cm 0cm 2.5cm 0.8cm},clip,width=0.47\linewidth]{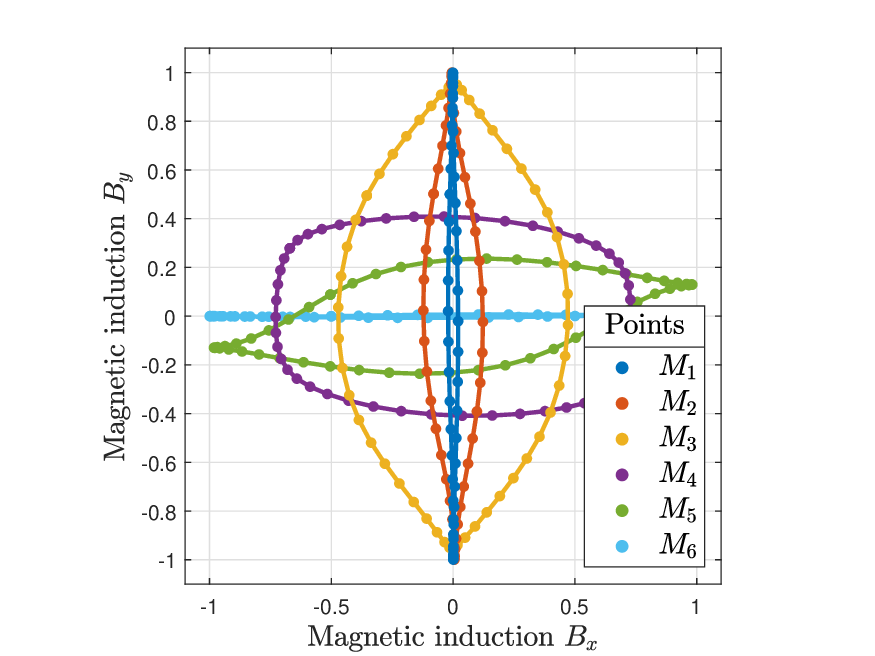}
    \hspace{0.1cm}
    \includegraphics[trim = {1.7cm 0cm 2.5cm 0.8cm},clip,width=0.47\linewidth]{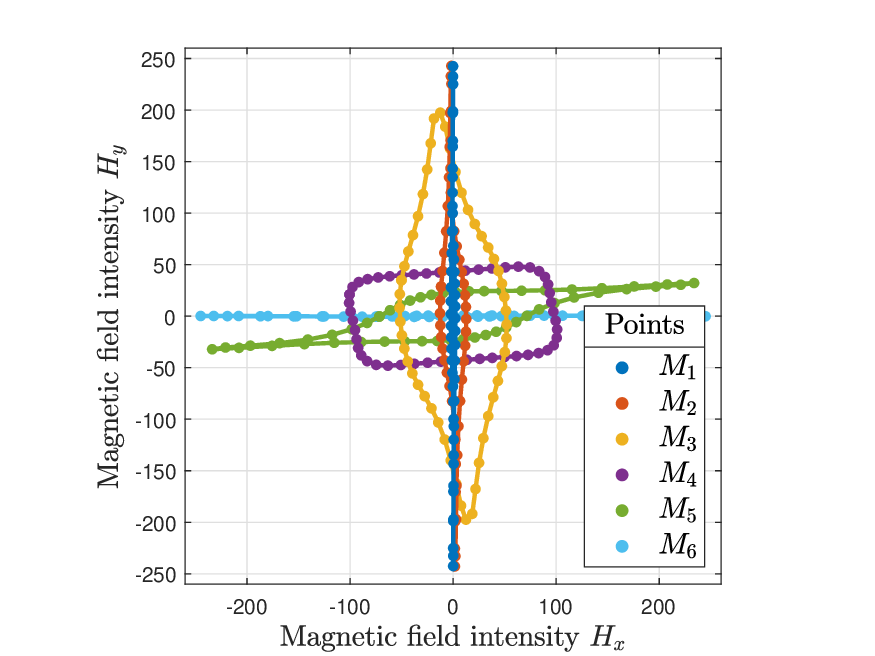}
    \caption{Locus curves of the of the magnetic induction $\B$ (left) and magnetic
    field intensity $\H$ (right) at different measurement points M (see Figure \ref{fig:geometry:tjoint}).}
    \label{fig:hxhy}
\end{figure}
\section{Discussion}
\label{sec:discussino}
In this paper, we discussed a systematic way of incorporating the energy-based hysteresis model of Henrotte et al. \cite{Henrotte2006,Lavet2013} into magnetic vector potential formulations. The key step was to realize that the corresponding quasi-static fields can be characterized via the minimizers of a strongly convex variational problem which involves the simultaneous minimization over the vector potential and the magnetic polarizations. 
Regularization of the non-smooth terms in the hysteresis model allowed us to circumvent technicalities and to apply standard Newton-methods with line search for the iterative solution of the underlying nonlinear field problems. 
The particular problem structure further allowed us to establish well-posedness of the problem and to consider its numerical solution by finite element discretization and globally convergent iterative solvers. Some details about the efficient implementation were highlighted and the performance of the proposed algorithms was demonstrated by numerical tests. 
Further study of the limiting case $\eps=0$ and its numerical solution by generalized Newton-methods~\cite{Gfrerer2021} are left as topics for future research.
\section*{Acknowledgements}
This work was supported by the joint DFG/FWF Collaborative Research Centre CREATOR (DFG: Project-ID 492661287/TRR 361; FWF: 10.55776/F90).

% %\bibliographystyle{abbrv}
%\bibliographystyle{unsrt}
%\bibliography{refs}

\end{document}